\newcommand{\PV}{\mathcal{PV}}
\newcommand{\V}{\mathcal{V}}
\DeclareMathOperator{\wt}{\mathop{\rm wt}}
\DeclareMathOperator{\var}{\mathop{\rm var}}
\DeclareMathOperator{\BAR}{\mathop{\textsc{Bar}}}
\DeclareMathOperator{\Des}{\mathop{\rm{Des}}}
\DeclareMathOperator{\des}{\mathop{\rm{des}}}
\newtheorem{lemma}{Lemma}
\newtheorem{proposition}{Proposition}
\newtheorem{theorem}{Theorem}
\newtheorem{corollary}{Corollary}
\theoremstyle{definition}
\newtheorem{example}{Example}
\newtheorem{remark}{Remark}
\newtheorem{definition}{Definition}
\renewcommand{\bar}{\overline}
\newcommand{\abs}[1]{\lvert#1\rvert}
\title{Sign variation and descents}
\author{Nantel Bergeron\thanks{Supported by NSERC and the York Research Chair in applied algebra.} \qquad Aram Dermenjian\thanks{Support by NSERC} \qquad John Machacek\thanks{ supported by the York Science Fellowship, York University}\\
\small Department of Mathematics and Statistics\\[-0.8ex]
\small York University\\[-0.8ex] 
\small Toronto, ON, U.S.A.\\
\small\tt bergeron@yorku.ca, aram.dermenjian@gmail.com, machacek@yorku.ca\\}
\begin{document}

\maketitle


\begin{abstract}
For any $n > 0$ and $0 \leq m < n$, let $P_{n,m}$ be the poset of projective equivalence classes of $\{-,0,+\}$-vectors of length $n$ with sign variation bounded by $m$, ordered by reverse inclusion of the positions of zeros.
Let $\Delta_{n,m}$ be the order complex of $P_{n,m}$. 
A previous result from the third author shows that $\Delta_{n,m}$ is Cohen-Macaulay over $\mathbb{Q}$ whenever $m$ is even or $m = n-1$.
Hence, it follows that the $h$-vector of $\Delta_{n,m}$ consists of nonnegative entries.
Our main result states that $\Delta_{n,m}$ is partitionable and we give an interpretation of the $h$-vector when  $m$ is even or $m = n-1$.
When $m = n-1$ the entries of the $h$-vector turn out to be the new Eulerian numbers of type $D$ studied by Borowiec and M\l otkowski in [{\em Electron. J. Combin.}, 23(1):Paper 1.38, 13, 2016].
We then combine our main result with Klee's generalized Dehn-Sommerville relations to give a geometric proof of some facts about these Eulerian numbers of type $D$.
\end{abstract}

\section{Introduction}

%
%
%
In this paper we are interested in a special simplicial complex, $\Delta_{n,m}$ for  $n > 0$ and $0 \leq m < n$. This complex arose from the work of~\cite{signvarM} 
dedicated to a generalization of Postnikov's totally nonnegative Grassmannian~\cite{Pos}. Topologically the complex 
 $\Delta_{n,m}$ is a combinatorial manifold (with boundary)~\cite[Theorem 3.4]{signvarM} with geometric realization homotopy equivalent to $\mathbb{RP}^m$~\cite[Theorem 3.6]{signvarM}.
It follows that $\Delta_{n,m}$ is Cohen-Macaulay over $\mathbb{Q}$ if and only if $m$ is even or $m=n-1$~\cite[Corollary 3.7]{signvarM}.
As seen in~\cite{Sta}, when  a simplicial complex is Cohen-Macaulay its $h$-vector has nonnegative entries. This led us to investigate the combinatorial properties of $\Delta_{n,m}$.

Let us start with the simple example where $n=3$ and $m=2$. As depicted in Figure~\ref{fig:Delta32},
we can represent $\mathbb{RP}^2$ as the upper half sphere in $\mathbb{R}^3$ with the identification of the antipodal points along the equator. We take a cell decomposition of $\mathbb{RP}^2$ according
to the signs of the coordinates. Since we work on projective space, this is well defined up to a global sign, and we may choose the first nonzero coordinate to be positive. On $\mathbb{RP}^2$, we get 
the interior of four triangles (2-dimensional cells) that correspond to elements with the following sign vectors: $(+,+,+)$, $(+,-,+)$, $(+,+,-)$ and $(+,-,-)$. The six segments between those triangles (1-dimensional cells) correspond to the sign vectors: $(+,+,0)$, $(+,-,0)$, $(+,0,+)$, $(+,0,-)$, $(0,+,+)$ and $(0,+,-)$.
Finally, the three vertices (0-dimensional cells) are given by the sign vectors: $(+,0,0)$, $(0,+,0)$ and $(0,0,+)$. 
We then consider the poset $P_{3,2}$ of cells, ordered by $X \le Y$ if $X$ is in the closure of $Y$.
With the sign vectors, this corresponds to replacing some entries of the sign vector of $Y$ by zeros to obtain the sign vector of $X$.

The simplicial complex $\Delta_{3,2}$ is the order complex of the poset $P_{3,2}$. Geometrically that is the barycentric subdivision of the cells defining $P_{3,2}$ (see Figure~\ref{fig:Delta32}). 
If we look at the barycentric subdivision of the closure of $(+,+,+)$, then each face of the result can be assigned a permutation very naturally.
Notice that, given a face $X$, the coordinates $(x_1,x_2,x_3)$ of any point in $X$ will have the same relative ordering.
The permutation $\sigma$ assigned to $X$ is such the $\sigma(i)$ is the 
position of the $i$th smallest coordinate, reading equal coordinates from left to right. For example, $(1,2,1)$ has permutation $(1,3,2)$. In Figure~\ref{fig:Delta32} we give the permutation of the six facets and point
toward the smallest face with the same permutation. It turns out that all faces with the same permutation $\sigma$ correspond exactly to the interval of faces between the facet indexed by $\sigma$ and the (unique) minimal one.
The full complex $\Delta_{3,2}$ has 24 facets that are in bijection with the signed permutations of type $D_3$ (as a subgroup of signed permutations of type $B_3$). In this paper we will give a map
such that each face of $\Delta_{3,2}$ is assigned a type $D_3$ permutation inducing a decomposition of the face poset of $\Delta_{3,2}$ into Boolean intervals.

%
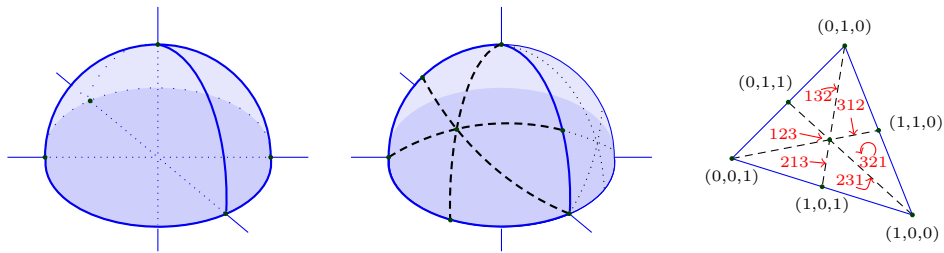
\begin{figure}[h!]
    \begin{center}
    \begin{tikzpicture}%
	[x={(1cm, .0cm)},
	y={(.0cm, 1 cm)},
	z={(.6cm, -0.5cm)},
	scale=0.05,
	font=\tiny,
	edge/.style={color=blue!95!black},
	facet/.style={fill=blue!95!black,fill opacity=0.100000},
	vertex/.style={inner sep=0pt,circle,draw=green!25!black,fill=green!75!black,thick,anchor=base}]
\draw[edge,dotted] (1.5,0,0) -- (30,0,0);
\draw[edge] (30,0,0) -- (40,0,0);
\draw[edge,dotted] (-1.5,0,0) -- (-30,0,0);
\draw[edge] (-30,0,0) -- (-40,0,0);
\draw[edge,dotted] (0,1,0) -- (0,30,0);
\draw[edge] (0,30,0) -- (0,40,0);
\draw[edge,dotted] (0,-1,0) -- (0,-18,0);
\draw[edge] (0,-19,0) -- (0,-25,0);
\draw[edge,dotted] (0,0,1) -- (0,0,30);
\draw[edge] (0,0,30) -- (0,0,40);
\draw[edge,dotted] (0,0,-1) -- (0,0,-30);
\draw[edge,loosely dotted] (0,0,-30) -- (0,0,-38);
\draw[edge] (0,0,-38) -- (0,0,-45);
 \draw[edge,facet,thick] (30,0,0) arc(5:-185:30 and 17);
 \draw[edge,facet] (30,0,0) arc(5:-185:30 and 17);
 \draw[edge,loosely dotted,facet] (-30,0,0) arc(185:-5:30 and 17);
 \draw[edge,thick] (0,30,0) arc(85:-5:20 and 41);
 \draw[edge,loosely dotted](0,30,0) arc(85:150:20 and 30);
 \draw[edge,facet,thick] (30,0,0) arc(0:180:30);
\node[vertex] at (30,0,0)     {\  };
\node[vertex] at (-30,0,0)     {\ };
\node[vertex] at (0,30,0)     {\ };
\node[vertex] at (0,0,30)     {\ };
\node[vertex] at (0,0,-30)     {\ };
\end{tikzpicture}
\quad
    \begin{tikzpicture}%
	[x={(1cm, .0cm)},
	y={(.0cm, 1 cm)},
	z={(.6cm, -0.5cm)},
	scale=0.05,
	font=\tiny,
	edge/.style={color=blue!95!black},
	facet/.style={fill=blue!95!black,fill opacity=0.100000},
	vertex/.style={inner sep=0pt,circle,draw=green!25!black,fill=green!75!black,thick,anchor=base}]
\draw[edge] (30,0,0) -- (40,0,0);
\draw[edge] (-30,0,0) -- (-40,0,0);
\draw[edge] (0,30,0) -- (0,40,0);
\draw[edge] (0,-19,0) -- (0,-25,0);
\draw[edge] (0,0,30) -- (0,0,40);
\draw[edge] (0,0,-38) -- (0,0,-45);
 \draw[edge,facet] (30,0,0) arc(5:-185:30 and 17);
 \draw[edge,facet] (30,0,0) arc(5:-185:30 and 17);
 \draw[edge,thick] (0,0,30) arc(-53:-185:30 and 17);
 \draw[edge,draw=none,facet] (-30,0,0) arc(185:-5:30 and 17);
 \draw[edge,thick] (0,30,0) arc(85:-5:20 and 41);
 \draw[edge,facet] (30,0,0) arc(0:180:30);
 \draw[edge,thick] (0,30,0) arc(90:180:30);
 \draw[thin, densely dashed,thick] (0,30,0) arc(95:185:15 and 43);
 \draw[thin, densely dashed,thick] (0,0,30) arc(-110:-150:75 and 84);
 \draw[thin, densely dashed,thick] (-30,0,0) arc(125:75:55 and 50);
 \draw[thin,  dotted] (1.9,19.1,24)   arc(75:60:55 and 50);
 \draw[thin, dotted] (0,30,0) arc(85:3:30 and 41);
 \draw[thin, dotted] (0,0,30) arc(-40:-5:38 and 50);
\node[vertex] at (-30,0,0)     {\ };
\node[vertex] at (0,30,0)     {\ };
\node[vertex] at (0,0,30)     {\ };
\node[vertex] at (-21,21.2,0)     {\ };
\node[vertex] at (-33.6,0,33.4)     {\ };
\node[vertex] at (1.9,19.1,24)     {\ };
\node[vertex] at (-19.7,13.9,13)     {\ };
\end{tikzpicture}
\quad
    \begin{tikzpicture}%
	[x={(1cm, .0cm)},
	y={(.0cm, 1 cm)},
	z={(.6cm, -0.5cm)},
	scale=0.05,
	font=\tiny,
	edge/.style={color=blue!95!black},
	vertex/.style={inner sep=0pt,circle,draw=green!25!black,fill=green!75!black,thick,anchor=base}]
\draw[edge] (-30,0,0)--(0,30,0);
\draw[edge] (-30,0,0)--(0,0,30);
\draw[edge] (0,30,0)--(0,0,30);
\draw[thin, densely dashed] (-30,0,0) -- (0,15,15);
\draw[thin, densely dashed] (0,30,0) -- (-15,0,15);
\draw[thin, densely dashed] (0,0,30) -- (-15,15,0);
\node[vertex] at (-30,0,0)     {\ };  \node at (-30,-5,0) {(0,0,1)};
\node[vertex] at (0,30,0)     {\ };  \node at (0,35,0) {(0,1,0)};
\node[vertex] at (0,0,30)     {\ }; \node at (0,-5,30) {(1,0,0)};
\node[vertex] at (0,15,15)     {\ };  \node at (10,17,15) {(1,1,0)};
\node[vertex] at (-15,0,15)     {\ };  \node at (-15,-5,15) {(1,0,1)};
\node[vertex] at (-15,15,0)     {\ }; \node at (-15,15,-10) {(0,1,1)};
\node[vertex] at (-10,10,10)     {\ };
\node[color=red!99!black] at (-18.3,8.3,3.3) {123};
\node[color=red!99!black] at (-9,18.3,3.3) {132};
\node[color=red!99!black] at (-18.3,3.3,8.3) {213};
\node[color=red!99!black] at (-9.5,3.5,18.3) {231};
\node[color=red!99!black] at (-3.3,8.3,18.3) {321};
\node[color=red!99!black] at (-3.3,18.3,8.3) {312};
\draw[color=red!99!black,->] (-14,8.3,3.3)--(-12,10.4,10) ;
\draw[color=red!99!black,->] (-3.3,15,8.3)--(-5,12.5,12.5) ;
\draw[color=red!99!black,->] (-14.3,3.3,8.3)--(-12.5,5,12.5) ;
\draw[color=red!99!black,->] (-8,1.5,18.3)..controls (-5,0.5,18.3) ..(-5,5,20) ;
\draw[color=red!99!black,->] (-8.3,20,3.3)..controls  (-5,20,2) .. (-5,21,5) ;
\draw[color=red!99!black,->] (-3.3,10,18.3) arc (-45:210:2) ;
\end{tikzpicture}
        \caption{On the left we have the cell decomposition of $\mathbb{RP}^2$ according the signs of coordinates whose closure poset is $P_{3,2}$. There are 4 interior triangles, 6 segments and 3 vertices. 
        In the center, we show the barycentric subdivision and obtain the simplicial complex $\Delta_{3,2}$ with 24 triangles, 36 segments and 13 vertices. On the right we look at the facet $(+,+,+)$ of $P_{3,2}$ and see that the facets of the barycentric subdivision are naturally indexed by permutations giving rise to a decomposition into Boolean intervals.}
        \label{fig:Delta32}
    \end{center}
\end{figure}

More generally, we show that the simplicial complexes $\Delta_{n,m}$ are partitionable when  $m$ is even or $m = n-1$.
This will give an interpretation for their $h$-vectors in terms of descents in even signed permutations.
For any $n > 0$ and $0 \leq m < n$, the simplicial complex $\Delta_{n,m}$
 is the order complex of a poset $P_{n,m}$.
The elements of $P_{n,m}$ are projective sign vectors of length $n$ with sign variation bounded by $m$.
Our main result is Theorem~\ref{thm:partitionable} which states that $\Delta_{n,m}$ is partitionable and gives an interpretation of the $h$-vector when  $m$ is even or $m = n-1$.
When $m = n-1$ the entries of the $h$-vector turn out to be the new Eulerian numbers of type $D$ studied by Borowiec and M\l otkowski~\cite{typeD}.
In Corollary~\ref{cor:DS} we combine our main result with Klee's generalized Dehn-Sommerville relations to give a geometric proof of some facts about these Eulerian numbers of type $D$.

There are at least two ways to view the motivation of this paper.
The first is that we want to further understand the complexes $\Delta_{n,m}$  by showing that they are partitionable and determining their $h$-vectors.
Given that $\Delta_{n,m}$ is Cohen-Macaulay if $m$ is even or $m = n-1$ it is natural to look for a partitioning since (even though it has been disproven~\cite{DGKM}) a long standing conjecture would suggest the complex may be partitionable~\cite[Conjecture 2.7]{Sta}.
It is not possible to show the stronger result that $\Delta_{n,m}$ is shellable for $m > 0$ since the complex is a manifold (with boundary) that is neither a ball nor a sphere~\cite[Proposition\ 1.2]{DanarajKlee}.
The second motivation is that our results give a geometric model for the new type-$D$ Eulerian numbers~\cite{typeD}.
It is well-known that the classical Eulerian numbers of type $A$ as well as Eulerian numbers of other types show up as the $h$-vector of the Coxeter complex (see e.g. Exercise 16 of Chapter 3 in~\cite{BB}).

\section{Sign variation and descents}

\subsection{Sign variation posets and complexes}
We will let $\V_n = \{-,0,+\}^n$ denote the set of \emph{sign vectors} of length $n$.
Given a sign vector $\omega$ the \emph{sign variation} of $\omega$ is denoted $\var(\omega)$ and is the number of times $\omega$ changes sign where zeros are ignored.
As an example we have that $\var((+,-,0,-,+)) = 2$.
The \emph{weight} of a sign vector $\omega$ is denoted $\wt(\omega)$ and is defined to be the number of non-zero entires of $\omega$.

For any $\omega \in \V_n$  we have $-\omega \in \V_n$.
We define an equivalence relation $\sim$ on $\V_n$ where $\omega \sim \omega'$  if and only if $\omega = \omega'$ or $\omega = - \omega'$.
We will let $\PV_n = (\V_n \setminus \{0\}^n) / \sim$ which is the collection of nonzero sign vectors up to equivalence.
Sign variation is well defined on $\PV_n$ since $\var(\omega) = \var(-\omega)$.

We will let $P_{n,m}$ denote the poset whose underlying set is $\{\omega \in \PV_n : \var(\omega) \leq m\}$ with order relation $\omega' < \omega$ if and only if $\pm \omega'$ can be obtained from $\omega$ by replacing some elements with $0$.
As examples $(0,+,0,-) < (+,+,+,-)$ and also $(0,+,0,-) < (+,-,-,+)$ since $(0,-,0,+) \sim (0,+,0,-)$.
The poset $P_{n,m}$ is ranked where the rank of an element $\omega$ is $\wt(\omega) - 1$.

A \emph{simplicial complex} is a collection of sets such that if $\sigma \in \Delta$ and $\tau \subseteq \sigma$ then $\tau \in \Delta$.
Notice this means that $\varnothing \in \Delta$ for any simplicial complex $\Delta$.
An element $\sigma \in \Delta$ is called a \emph{face} and the \emph{dimension} of $\sigma$ is $\dim \sigma = |\sigma| - 1$.
Faces which are maximal with respect to inclusion are called \emph{facets}.
The \emph{order complex} $\Delta(P)$ of a poset $P$ is the simplicial complex on vertex set $P$ whose $k$-dimensional faces are the chains of consisting of $k+1$ elements in $P$.
We then let $\Delta_{n,m}$ denote the order complex $\Delta(P_{n,m})$.

\begin{example}
    Let $n = 2$ and $m =1$.
    Then
    \[
        \V_n = \left\{ (+,+),\,(+,-),\,(+,0),\,(0,+),\,(0,-),\,(0,0),\,(-,+),\,(-,-),\,(-,0) \right\}.
    \]
    By our equivalence relation we have
    \[
        \PV_n = \left\{ (+,+),\,(+,-),\,(+,0),\,(0,+) \right\}.
    \]
    Since we can change $0$ to either a $+$ or a $-$, then the Hasse diagram of the poset $P_{2,1}$ is given on the left of Figure~\ref{fig:posSignVecs}.
	Looking at chains in our poset $P_{2,1}$ we see that we have four $1$ element chains and four $2$ element chains.
    Therefore, our order complex $\Delta_{2,1}$ has four $0$-dimensional faces and four $1$-dimensional faces.
    The Hasse diagram of the face poset of $\Delta_{2,1}$ is shown on the right of Figure~\ref{fig:posSignVecs}.
\end{example}

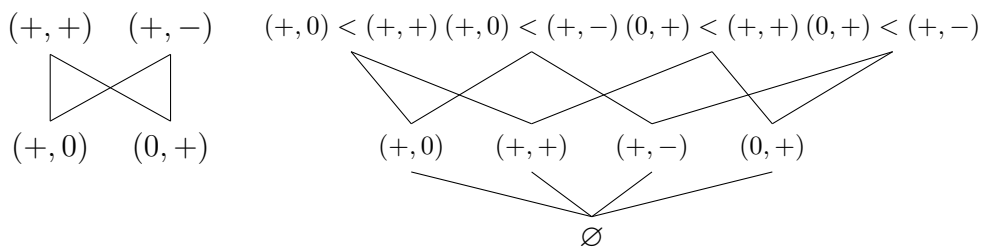
\begin{figure}[h!]
    \begin{center}
\begin{tikzpicture}[scale=0.8]
        \begin{scope}[shift={(0,0)}]        
            \node (p0) at (-1,0) {$(+,0)$};
            \node (0p) at (1,0) {$(0,+)$};
            \node (pp) at (-1,2) {$(+,+)$};
            \node (pn) at (1,2) {$(+,-)$};
            
            \draw (p0.north) -- (pp.south);
            \draw (p0.north) -- (pn.south);
            \draw (0p.north) -- (pp.south);
            \draw (0p.north) -- (pn.south);
        \end{scope}
        \begin{scope}[shift={(8,0)}]
                    \node (min) at (0,-1.5) {$\varnothing$};

            \node (p0) at (-3,0) {\footnotesize$(+,0)$};
            \node (0p) at (3,0) {\footnotesize$(0,+)$};
            \node (pp) at (-1,0) {\footnotesize$(+,+)$};
            \node (pn) at (1,0) {\footnotesize$(+,-)$};
            
            \node (p0-pp) at (-4,2) {\footnotesize$(+,0)<(+,+)$};
            \node (p0-pn) at (-1,2) {\footnotesize$(+,0)<(+,-)$};
            \node (0p-pp) at (2,2) {\footnotesize$(0,+)<(+,+)$};
            \node (0p-pn) at (5,2) {\footnotesize$(0,+)<(+,-)$};

	\draw (min.north) -- (p0.south);
		\draw (min.north) -- (0p.south);
	\draw (min.north) -- (pp.south);
	\draw (min.north) -- (pn.south);
            \draw (p0.north) -- (p0-pp.south);
            \draw (p0.north) -- (p0-pn.south);
            \draw (0p.north) -- (0p-pp.south);
            \draw (0p.north) -- (0p-pn.south);
            \draw (pp.north) -- (p0-pp.south);
            \draw (pp.north) -- (0p-pp.south);
            \draw (pn.north) -- (p0-pn.south);
            \draw (pn.north) -- (0p-pn.south);
        \end{scope}
        
    \end{tikzpicture}
        \caption{Hasse diagram of $P_{2,1}$ and the face poset $\mathcal{F}(\Delta_{2,1})$ of the order complex.}
        \label{fig:posSignVecs}
    \end{center}
\end{figure}

For a simplicial complex of dimension $d$, let $f_i$ denote the number of $i$-dimensional faces.
The \emph{$f$-vector} of a simplicial complex $\Delta$ is then the $f_i$ arranged as a vector:
\[
    f(\Delta) = (f_{-1}, f_0, \ldots, f_d)
\]
where $f_{-1} = 1$.
The $h$-vector of a simplicial complex $\Delta$ is defined using the $f$-vector.
Let
\[
    h_k = \sum_{i = 0}^k (-1)^{k-i} \binom{d-i}{k-i} f_{i-1}.
\]
Then the \emph{$h$-vector} of $\Delta$ is the vector:
\[
    h(\Delta) = \left( h_0, h_1, \ldots, h_{d+1} \right).
\]
Looking at Example~~\ref{fig:posSignVecs} we see that $f(\Delta_{2,1}) = (1,4,4)$ and $h(\Delta_{2,1}) = (1,2,1)$.

The \emph{face poset} of a simplicial complex $\Delta$ is denoted $\mathcal{F}(\Delta)$ and consists of all faces of $\Delta$ ordered by inclusion.
Given a poset $P$ and any two elements $x,y \in P$ with $x \leq y$ we have the \emph{(closed) interval} 
\[[x,y] = \{z \in P : x \leq z\leq y\}.\]
The collection of all subsets of a given set ordered by inclusion is known as a \emph{Boolean poset}.
A simplicial complex $\Delta$ is said to be \emph{partitionable} if its face poset can be written as the disjoint union
\[\mathcal{F}(\Delta) = \bigsqcup_{F\in Facets(\Delta)} [G_F, F] \]
where  $Facets(\Delta)$ is the set of facets (maximal faces) of $\Delta$ and each interval $[G_F, F]$ is a Boolean poset for some $G_F$.
In general the $h$-vector of a simplicial complex may contain negative entries.
However, if $\Delta$ is partitionable with its face poset written as above, then by a result of Stanley (see \cite{Sta}):
\[h_j = | \{F : |G_F| = j \text { and } F\in Facets(\Delta)\} |\]

Each $\Delta_{n,m}$ is a combinatorial manifold (with boundary)~\cite[Theorem 3.4]{signvarM} with geometric realization homotopy equivalent to $\mathbb{RP}^m$~\cite[Theorem 3.6]{signvarM}.
The geometric realization of $\Delta_{n,n-1}$ is the manifold $\mathbb{RP}^{n-1}$.
It follows that $\Delta_{n,m}$ is Cohen-Macaulay over $\mathbb{Q}$ if and only if $m$ is even or $m=n-1$~\cite[Corollary 3.7]{signvarM}.
When a simplicial complex is Cohen-Macaulay its $h$-vector has nonnegative entries.
For a treatment of Cohen-Macaulay simplicial complexes and their properties we refer the reader to~\cite{Sta}.


\subsection{Signed permutations and descents}
We will denote the set of permutations of $[n]$ by $S^A_n$ and usually think of permutations in one-line notation.
A \emph{signed permutation of $[n]$} is a bijection $\pi: [\pm n] \to [\pm n]$ such that $\pi(-i) = - \pi(i)$ for all $i \in [\pm n]$.
The set of signed permutations of $[n]$ is denoted by $S^B_n$.
Any signed permutation $\pi$ can be represented by the sequence $[\pi(1), \pi(2), \dots, \pi(n)]$ which is known as \emph{window} notation.
We will often use $\bar{i}$ to denote $-i$ for $i \in [n]$.
In this way we can denote the signed permutation with window notation $[-2,3,4,-1]$ by $\bar{2}34\bar{1}$.
We will also denote a signed permutation by an ordered pairs $(\pi, X)$ consisting of a usual permutation $\pi \in S^A_n$ along with a set $X \subseteq [n]$ recording the \emph{numbers} of negative entries in window notation, thus we can denote $[-2, 3, 4, -1]$ by $\left( 2341, \left\{ 1,2 \right\}\right)$.

We let $S^D_n$ denote the set of \emph{even signed permutations} of $[n]$ which means there is an even number of negative entries in the window notation.
Equivalently we can say 
\[S^D_n = \{(\pi, X) : \pi \in S^A_n, X \subseteq [n], |X| \equiv 0 \bmod 2\}.\]
We then let $S^D_{n,m}$ denote all elements of $S^D_n$ with at most $m$ negative entries in window notation or equivalently
\[S^D_{n,m} = \{(\pi, X) : \pi \in S^D_n, |X| \leq m\}.\]

For any sequence of integers $w = (w_0, w_1, \dots, w_n)$  we say that $i$ is a \emph{descent} of $w$ if $w_i > w_{i+1}$.
For any signed permutation $\pi$ with window notation $[\pi(1), \pi(2), \dots, \pi(n)]$ we let $w(\pi) = (0, \pi(1), \pi(2), \dots, \pi(n))$ and define
\[\Des(\pi) = \{i : i \text{ is a descent of } w(\pi)\} \subseteq \{0,1,\ldots n-1\}\]
to be the \emph{descent set} of $\pi$.
We also let $\des(\pi) = |\Des(\pi)|$.
Finally we let \[D(n,k) = |\{\pi \in S^D_n: \des(\pi) = k\}|\]
which count the number of even signed permutations with a given number of descents.

\begin{remark}
The quantity $D(n,k)$ was first studied by Borowiec and M\l otkowski~\cite{typeD}.
There is a general notion of descent in any Coxeter group.
The number $D(n,k)$ computes descents with respect to the Coxeter group generators of $S^B_n$ restricted to elements the subgroup $S^D_n$.
For the general theory of descents as well as other combinatorics in Coxeter groups we refer the reader to~\cite{BB}.
\end{remark}

\section{The partitioning}
Given $\omega \in \PV_n$ we will consider indices cyclically so that $\omega_i = \omega_{i+n}$ for any $i$.
We say that $i \in [n]$ is a \emph{cyclic sign flip} of $\omega \in \PV_n$ if there exists a $j$ such that $\omega_{i-j} \omega_i < 0$ while $\omega_{i-k} \omega_i = 0$ for all $1 \leq k < j$.
We define a function $\BAR: \PV_n \to [n]$ by
\[\BAR(\omega) = \{i \in [n] : i \text{ is a cyclic sign flip of $\omega$}\}\]
for each $\omega \in \PV_n$.
For example, $\BAR((0,+,-,-,0,+,-)) = \{2,3,6,7\}$.  Here $2 \in \BAR((0,+,-,-,0,+,-))$ since $\omega_2 =+$ and for $j=2$, as we are looking at the indices cyclically, $\omega_0=\omega_7 = -$.
We have the following results which follow immediately from the definition of $\BAR$.

\begin{lemma}
For any $\omega \in \PV_n$ the size of $\BAR(\omega)$ is even.
\label{lem:even}
\end{lemma}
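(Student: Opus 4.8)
The plan is to recognize that $\BAR(\omega)$ records exactly the sign changes encountered during one complete cyclic traversal of the nonzero entries of $\omega$, and then to observe that any closed loop must register an even number of sign changes.

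First I would isolate the nonzero entries. Since $\omega \in \PV_n$ is nonzero, let $i_1 < i_2 < \cdots < i_t$ be the positions in $[n]$ with $\omega_{i_k} \neq 0$, read cyclically so that the predecessor of $i_1$ is $i_t$; I set $i_0 := i_t$. Unwinding the definition of a cyclic sign flip, an index $i$ belongs to $\BAR(\omega)$ precisely when $i = i_k$ for some $k$ and the nearest preceding nonzero entry, namely $\omega_{i_{k-1}}$ (indices on the $i_k$ taken modulo $t$), satisfies $\omega_{i_{k-1}}\omega_{i_k} < 0$; the condition $\omega_{i-\ell}\omega_i = 0$ for $1 \leq \ell < j$ simply encodes skipping over the intervening zeros. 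Thus $|\BAR(\omega)|$ equals the number of indices $k \in \{1, \ldots, t\}$ for which $\omega_{i_{k-1}}$ and $\omega_{i_k}$ have opposite signs.

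The key step is a parity argument. Consider the cyclic product
\[
\prod_{k=1}^{t} \omega_{i_{k-1}} \omega_{i_k} = \left( \prod_{k=1}^{t} \omega_{i_k} \right)^2 > 0,
\]
where the equality holds because each nonzero entry appears exactly twice as we run around the cycle. A factor $\omega_{i_{k-1}}\omega_{i_k}$ is negative exactly when $k$ contributes an element to $\BAR(\omega)$, so the number of negative factors is precisely $|\BAR(\omega)|$. Since the product is a square and hence positive, the number of negative factors must be even, giving $|\BAR(\omega)| \equiv 0 \pmod 2$.

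I do not anticipate a genuine obstacle here; the only points requiring care are the cyclic wraparound from $i_t$ back to $i_1$, which the product handles automatically through the convention $i_0 = i_t$, and the degenerate case $t = 1$, where there are no sign flips and $|\BAR(\omega)| = 0$ is trivially even. One could equally phrase the argument by tracking the running sign $\mathrm{sign}(\omega_{i_k}) \in \{\pm 1\}$ around the cycle and noting that returning to the starting sign forces an even number of flips; the product formulation simply packages this parity bookkeeping into a single inequality.
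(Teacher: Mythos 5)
Your proof is correct: identifying $\BAR(\omega)$ with the sign changes between cyclically consecutive nonzero entries and then using the telescoping product $\prod_k \omega_{i_{k-1}}\omega_{i_k} = \bigl(\prod_k \omega_{i_k}\bigr)^2 > 0$ is exactly the parity argument the paper has in mind, since the paper states the lemma as following immediately from the definition of $\BAR$ and supplies no further proof. Your treatment of the wraparound and the degenerate case $t=1$ is also accurate, so nothing is missing.
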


\begin{lemma}
If $\omega \in \PV_n$  and $i \in \BAR(\omega)$, then $\omega_i \neq 0$.
\label{lem:nonzero}
\end{lemma}

\begin{definition}
We define a function $\Phi: \Delta_{n,m} \to S^D_n$ for any $ 0 \leq m \leq n-1$.
Consider any chain $C: \omega^{(1)} < \omega^{(2)} < \cdots < \omega^{(r)}$.
To obtain $\Phi(C)$ we start with the empty word, setting $\omega^{(0)} = 0^n$.
For $s = 1,2, \ldots , r$ iterate the following process:
\begin{enumerate}
\item Set $I_s := \{i \in [n]: \omega^{(s)}_i \neq 0, \omega^{(s-1)}_i = 0\}$.
\item Set $\bar{I_s} := \{i : i \in I_s, i \not\in \BAR(\omega^{(r)})\} \cup \{\bar{i} : i \in I_s, i \in \BAR(\omega^{(r)})\}$.
\item Let $\omega'_s$ be the word where the elements of $\bar{I_s}$ are written in increasing order.
\end{enumerate}
Finally set $I_{r+1} := \{i \in [n]: \omega^{(r)}_i = 0\}$.
Let $\bar{I_{r+1}}$ and $\omega'_{r+1}$ be defined as above.
Then we obtain $\Phi(C)$ by concatenating all the words in reverse order:
\[
    \Phi(C) = \omega'_{r+1}\omega'_{r} \ldots \omega'_2 \omega'_1.
\]

For $\pi = \Phi(C) = \omega'_{r+1}\omega'_{r} \ldots \omega'_2 \omega'_1$ and $0 \leq i \leq r+1$ we let $\ell(C,i)$ denote the number of letters in the initial part of $\pi$, i.e.~$\ell(C,i) = \abs{\cup_{j > i} I_j}$.

\label{def:Phi}
\end{definition}

\begin{example}
    Let $n = 9$ and $m = 8$.
    As a first example, take 
    \[C_1: (0,+,-,0,0,0,0,0,+) < (0,+,-,0,-,+,0,0,+) < (0,+,-,-,-,+,-,+,+).\]
Then
\[\BAR(\omega^{(3)}) = \BAR((0,+,-,-,-,+,-,+,+)) = \{3,6,7,8\}.\]
For $s = 1$ we have:
\begin{enumerate}
    \item $I_1 = \left\{ 2,3,9 \right\}$
    \item $\bar{I}_1 = \left\{ 2,\bar{3},9 \right\}$
    \item Therefore, $\omega'_1 = \bar{3}29$.
\end{enumerate}
For $s = 2$ we have:
\begin{enumerate}
    \item $I_2 = \left\{ 5,6 \right\}$
    \item $\bar{I}_2 = \left\{ 5,\bar{6} \right\}$
    \item Therefore, $\omega'_2 = \bar{6}5$.
\end{enumerate}
For $s = 3$ we have:
\begin{enumerate}
    \item $I_3 = \left\{ 4,7,8 \right\}$
    \item $\bar{I}_3 = \left\{ 4, \bar{7}, \bar{8} \right\}$
    \item Therefore, $\omega'_3 = \bar{8}\bar{7}4$.
\end{enumerate}
Finally $I_4 = \left\{ 1 \right\}$.
Therefore $\omega'_4 = 1$.

Concatenating these results gives:
\[
    \Phi(C_1) = \omega'_4\omega'_3\omega'_2\omega'_1 = 1\bar{8}\bar{7}4\bar{6}5\bar{3}29.
\]

Then we have:
\[
    \ell(C_1,4) = 0,\,\ell(C_1,3) = 1,\,\ell(C_1,2) = 4,\,\ell(C_1,1) = 6,\,\ell(C_1,0) = 9 = n.
\]

Similarly, if we take
\[C_2: (0,+,-,0,0,0,0,0,-) < (0,+,-,0,-,+,0,0,-) < (0,+,-,-,-,+,-,+,-)\]
we have 
\[\BAR((0,+,-,-,-,+,-,+,-)) = \{2,3,6,7,8,9\}\]
and $\Phi(C_2) = 1\bar{8}\bar{7}4\bar{6}5\bar{9}\bar{3}\bar{2}$.
\end{example}

\begin{lemma}
If $0 \leq m \leq n-1$ such that $m$ is even, then $\Phi(\Delta_{n,m}) \subseteq S^D_{n,m}$.
\label{lem:subset}
\end{lemma}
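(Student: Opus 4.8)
The plan is to reduce the claim to a statement about the top element of the chain alone. First I would observe that the negative (barred) letters of $\Phi(C)$ are controlled entirely by $\BAR(\omega^{(r)})$, where $\omega^{(r)}$ is the maximum of the chain $C$. In the construction of Definition~\ref{def:Phi} the sets $I_1, \ldots, I_{r+1}$ partition $[n]$, and a letter $i$ receives a bar precisely when $i \in \BAR(\omega^{(r)})$. For positions $i$ with $\omega^{(r)}_i \neq 0$ (those lying in some $I_s$ with $s \le r$) this is immediate from step (2) of the definition; for the remaining positions, which make up $I_{r+1}$ and satisfy $\omega^{(r)}_i = 0$, Lemma~\ref{lem:nonzero} guarantees that none of them lie in $\BAR(\omega^{(r)})$, so all letters of $\omega'_{r+1}$ are unbarred. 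Consequently the set of negative letters of $\Phi(C)$ is exactly $\BAR(\omega^{(r)})$, so the number of negative entries equals $\abs{\BAR(\omega^{(r)})}$, which is even by Lemma~\ref{lem:even}; this already confirms $\Phi(C) \in S^D_n$.

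It then remains to show $\abs{\BAR(\omega^{(r)})} \le m$. The key estimate I would isolate is the inequality $\abs{\BAR(\omega)} \le \var(\omega) + 1$, valid for every $\omega \in \PV_n$. This comes from reading $\BAR(\omega)$ as the number of \emph{cyclic} sign changes among the nonzero entries of $\omega$: each nonzero position contributes to $\BAR(\omega)$ exactly when the preceding nonzero entry, taken cyclically, has the opposite sign. The \emph{linear} count of such sign changes is precisely $\var(\omega)$, and passing to the cyclic count introduces only the single extra adjacency wrapping from the last nonzero entry back to the first, which can add at most one sign change. Hence $\var(\omega) \le \abs{\BAR(\omega)} \le \var(\omega)+1$.

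Finally I would combine these ingredients with a parity argument. Since $\omega^{(r)} \in P_{n,m}$ we have $\var(\omega^{(r)}) \le m$, so the estimate above gives $\abs{\BAR(\omega^{(r)})} \le m+1$. Now I use the hypothesis that $m$ is even together with the evenness of $\abs{\BAR(\omega^{(r)})}$ from Lemma~\ref{lem:even}: an even integer that is at most $m+1$ is in fact at most $m$, since $m+1$ is odd. Therefore $\Phi(C)$ has at most $m$ negative entries, giving $\Phi(C) \in S^D_{n,m}$ as desired.

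The main obstacle, such as it is, lies in the parity upgrade of the last step: the naive bound only yields $\abs{\BAR(\omega^{(r)})} \le m+1$, and it is precisely the evenness hypothesis on $m$ (matching the even cardinality of $\BAR$ forced by Lemma~\ref{lem:even}) that closes the gap. This also explains structurally why the even case is the natural hypothesis, and signals that for odd $m$ one should not expect $\Phi$ to land inside $S^D_{n,m}$.
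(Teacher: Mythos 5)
Your proof is correct and follows essentially the same route as the paper: the paper likewise bounds $\abs{\BAR(\omega^{(r)})}$ via $\var(\omega^{(r)}) \leq m$ (stating the exact relation $\abs{\BAR} = \var$ or $\var + 1$ according to parity, which is just a sharper phrasing of your inequality) and uses the evenness of $m$ to close the gap. The only difference is that you spell out why the barred letters of $\Phi(C)$ are exactly $\BAR(\omega^{(r)})$, a point the paper's proof leaves implicit from Definition~\ref{def:Phi}.
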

\begin{proof}
Let $C: \omega^{(1)} < \omega^{(2)} < \cdots < \omega^{(r)}$ be any chain in $\Delta_{n,m}$.
Also let $k = \var(\omega^{(r)})$.
Now $|\BAR(\omega^{(r)})| = k$ if $k$ is even and $|\BAR(\omega^{(r)})| = k+1$ if $k$ is odd.
Since $k \leq m$ where $m$ is even it follows that $|\BAR(\omega^{(r)})| \leq m$.
Therefore, $\Phi(\Delta_{n,m}) \subseteq S^D_{n,m}$.
\end{proof}

\begin{lemma}
For any $n$
\[ \{\text{Facets of } \Delta_{n,n-1}\} \overset{\Phi}\longrightarrow S^D_{n}\]
is a bijection and thus $\Phi(\Delta_{n,n-1}) = S^D_{n}$.
Moreover, for $0 \leq m \leq n-1$ such that $m$ is even, then
\[ \{\text{Facets of } \Delta_{n,m}\} \overset{\Phi}\longrightarrow S^D_{n,m}\]
is a bijection and $\Phi(\Delta_{n,m}) = S^D_{n,m}$.
\label{lem:bijection}
\end{lemma}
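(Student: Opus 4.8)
The plan is to construct an explicit two‑sided inverse to $\Phi$ on facets; the work splits into understanding the facets of $\Delta_{n,m}$ and understanding the map $\omega\mapsto\BAR(\omega)$ on full‑support vectors. First I would identify the facets. A facet is a saturated chain in the ranked poset $P_{n,m}$ from a weight‑$1$ element to a maximal element, and I claim the maximal elements are exactly the full‑support projective sign vectors $\omega$ with $\var(\omega)\le m$. The key observations are that turning on a single zero entry never decreases sign variation, and that by choosing the new sign to agree with an adjacent nonzero entry it can always be done with $\var$ unchanged; hence any vector with a zero entry lies below a full‑support vector of the same variation, so the maximal elements have weight $n$. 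Since the restriction of a full‑support $\omega$ to any subset of coordinates has variation at most $\var(\omega)$, every saturated chain from weight $1$ to a full‑support top $\omega$ with $\var(\omega)\le m$ automatically lies in $P_{n,m}$. Thus a facet is precisely the data of a full‑support $\omega$ with $\var(\omega)\le m$ together with an ordering $c_1,\dots,c_n$ of $[n]$ (the order in which coordinates turn on), where $\omega^{(s)}$ is the restriction of $\omega$ to $\{c_1,\dots,c_s\}$. Reading Definition~\ref{def:Phi} on such a chain, every $I_s=\{c_s\}$ is a singleton and $I_{n+1}=\varnothing$, so in pair notation $\Phi(C)=(\sigma,\BAR(\omega))$, where $\sigma$ is the permutation reversing the turn‑on order.

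Next I would show that $\omega\mapsto\BAR(\omega)$ is a bijection from full‑support projective sign vectors to even subsets of $[n]$. For full support every index falls under the $j=1$ case of the definition, so $\BAR(\omega)=\{i:\omega_i\ne\omega_{i-1}\}$ (indices cyclic) is the set of cyclic sign changes, of even size by Lemma~\ref{lem:even}. Conversely, given an even set $X$, fixing $\omega_1=+$ and propagating $\omega_i=-\omega_{i-1}$ exactly when $i\in X$ reconstructs a full‑support $\omega$, with the even cardinality of $X$ guaranteeing the cyclic consistency at $i=1$; this is the inverse. I would also record the elementary relation $\var(\omega)\le|\BAR(\omega)|$, where $|\BAR(\omega)|$ equals $\var(\omega)$ or $\var(\omega)+1$ according as $\var(\omega)$ is even or odd.

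Finally I would assemble the bijection. Combining the two steps, $\Phi$ sends facets injectively to pairs $(\sigma,X)$ with $\sigma\in S^A_n$ and $X=\BAR(\omega)$ for a valid top $\omega$, since $\sigma$ recovers the turn‑on order and $X$ recovers $\omega$ through the $\BAR$ bijection; the explicit inverse sends $(\sigma,X)$ to the chain with top $\BAR^{-1}(X)$ and turn‑on order reversing $\sigma$, which lies in $\Delta_{n,m}$ because its restrictions have variation at most $|X|$. For $m=n-1$ the constraint $\var(\omega)\le n-1$ is automatic for full support, so $X$ ranges over all even subsets and the image is all of $S^D_n$. For even $m$, the equivalence $\var(\omega)\le m\iff|\BAR(\omega)|\le m$—which uses that $m$ is even together with the relation above—shows the valid tops correspond exactly to even sets $X$ with $|X|\le m$, so the image is exactly $S^D_{n,m}$; the inclusion $\Phi(\Delta_{n,m})\subseteq S^D_{n,m}$ is also Lemma~\ref{lem:subset}.

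I expect the main obstacle to be the facet characterization: verifying that the maximal elements are the full‑support vectors and that every saturated chain to such a top stays within the bound $m$. The monotonicity of $\var$ under turning coordinates on, together with the bound $\var(\text{restriction})\le\var(\omega)$, is exactly what makes the intermediate constraints automatic, and pinning down these inequalities cleanly is the crux; the $\BAR$ bijection and the parity bookkeeping for even $m$ are then routine.
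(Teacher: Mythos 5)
Your proof is correct and takes essentially the same approach as the paper: facets are saturated chains encoded by a turn-on order together with the cyclic sign-flip set $\BAR(\omega)$ of the full-support top, and the inverse reconstructs the chain from the Boolean chain of the underlying permutation plus $\BAR^{-1}(X)$. You also fill in details the paper leaves implicit (the characterization of maximal elements, why the reconstructed chain lies in $\Delta_{n,m}$, and the parity argument that $\var(\omega)\le m$ iff $\abs{\BAR(\omega)}\le m$ when $m$ is even), which only strengthens the write-up.
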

\begin{proof}
Consider any $n$ and $0 \leq m \leq n-1$ with $m$ even or $m = n-1$.
Given any chain $C: \omega^{(1)} < \omega^{(2)} < \cdots < \omega^{(r)}$  in $\Delta_{n,m}$ we see that $\Phi(C) \subseteq S^D_{n,m}$ by Lemma~\ref{lem:subset}.
So, showing the bijection on the facets will imply that $\Phi(\Delta_{n,m}) = S^D_{n,m}$.

Let us describe the bijection between facets of $\Delta_{n,n-1}$ and elements of $S^D_n$.
This bijection will restrict to a bijection between facets of $\Delta_{n,m}$ and elements of $S^D_{n,m}$ whenever $m$ is even.
Any facet of $\Delta_{n,n-1}$ is a saturated chain
\[C:\omega^{(1)} < \omega^{(2)} < \cdots < \omega^{(n)}\]
with $\omega^{(i)} \in \PV_n$ for $1 \leq i \leq n$.
Such a statured chain $C$ determines a permutation $\pi_C$ whose $i$th entry for $1 \leq i < n$ in one-line notation is the unique index $k$ such that $\omega^{(n-i+1)}_k \neq 0$ but $\omega^{(n-i)}_k = 0$  while its $n$th entry is the unique index $k$ such that $\omega^{(1)}_k \neq 0$.
Also, the statured chain $C$ determines the set $X_C = \BAR(\omega^{(n)})$.
So, the desired bijection maps $C$ to $(\pi_C, X_C)$ which is indeed an element of $S^D_n$ by Lemma~\ref{lem:even}.
Moreover, it is clear that $(\pi_C, X_C)$ is in $S^D_{n,m}$ if $C$ is a chain in $\Delta_{n,m}$.

To see this map is a bijection we describe the inverse map.
For any $\pi \in S^D_n$ consider $\pi$ as an element of $S^A_n$ by forgetting the sign of entries in window notation.
This determines a saturated chain of binary vectors in the usual way where permutations correspond to saturated chains in a Boolean poset while subsets can be put into bijective correspondence with binary vectors.
The binary vectors can be made into elements of $\PV_n$ in the only way compatible with negative entries of $\pi$ in window notation.
This is done by taking the set $X$ of negative entries of $\pi$ and creating the unique sign vector in $\PV_n$ with no $0$ components, whose set of cyclic sign flips is equal to $X$.
An example of this bijection can be seen in Example~\ref{ex:bijection}.
\end{proof}

\begin{example}
The bijection in the proof of Lemma~\ref{lem:bijection} maps $\bar{2}315\bar{4}$, which is equivalent to the pair $(23154, \{2,4\})$, to the chain
\[(0,0,0,+,0) < (0,0,0,+,+) < (+,0,0,+,+) < (+,0,-,+,+) < (+,-,-,+,+)\]
that is a facet of $\Delta_{5,2}$. In more details, it maps the permutation $23154$ to the Boolean chain 
\[(0,0,0,1,0) < (0,0,0,1,1) < (1,0,0,1,1) < (1,0,1,1,1) < (1,1,1,1,1).\]
Then the set $ \{2,4\}$ determines (uniquely, up to a global sign) the sign changes as $(+,-,-,+,+)$.
\label{ex:bijection}
\end{example}

For a chain $C: \omega^{(1)} < \omega^{(2)} < \cdots < \omega^{(k)}$, let $C_i$ denote the subchain of $C$ with $\omega^{(i)}$ removed:
\[
    C_i : \omega^{(1)} < \cdots < \omega^{(i-1)} < \omega^{(i+1)} < \cdots < \omega^{(k)}.
\]
For $I \subseteq [k]$ let $C_I$ be the subchain of $C$ with $\omega^{(i)}$ removed for  all $i \in I$.
The sign vectors we can remove from a chain $C$, without changing the value of $\Phi(C)$, are directly governed by the descent set of the permutation to which it is associated.
The reader is invited to recall Definition~\ref{def:Phi} for the notation used in the following lemma and proof.

\begin{lemma}
    Let $0 \leq m \leq n-1$ such that either $m$ is even or $m = n-1$.
    Let 
    \[
        C: \omega^{(1)} < \omega^{(2)} < \cdots < \omega^{(k)}
    \]
    be a chain in $\Delta_{n,m}$ and let $\Phi(C) = \pi = \omega_{k+1}'\omega_k'\omega_{k-1}'\ldots\omega_2'\omega_1' \in \Phi(\Delta_{n,m})$ with descent set $\Des(\pi)$.
    
    For $i \in [k-1]$, $\Phi(C_i) = \pi = \Phi(C)$ if and only if $\ell(C,i) \notin \Des(\pi)$.
    For $i =k$, $\Phi(C_k) = \pi = \Phi(C)$ if and only if $\ell(C,k) \notin \Des(\pi)$ and $\BAR(\omega^{(k)}) = \BAR(\omega^{(k-1)})$.
    \label{lem:descent_or_remove}
\end{lemma}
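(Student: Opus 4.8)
The plan is to track exactly how deleting a single sign vector from $C$ alters the blocks $\omega'_s$ produced in Definition~\ref{def:Phi}, and to reduce the equality $\Phi(C_i)=\Phi(C)$ to a comparison of two adjacent letters of $\pi$.

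First I would record the effect of deleting $\omega^{(i)}$ on the sets $I_s$. Because $\omega^{(i-1)} < \omega^{(i)} < \omega^{(i+1)}$ forces a strictly increasing chain of supports, for $i \in [k-1]$ deleting $\omega^{(i)}$ leaves every $I_s$ with $s \notin \{i,i+1\}$ unchanged and replaces the pair $I_{i+1}, I_i$ by the single set $I_{i+1}\cup I_i = \mathrm{supp}(\omega^{(i+1)})\setminus \mathrm{supp}(\omega^{(i-1)})$ (a disjoint union). Crucially, for $i<k$ the top element $\omega^{(k)}$ is unchanged, so $\BAR(\omega^{(k)})$ and hence the signing in Step 2 of Definition~\ref{def:Phi} is identical for $C$ and $C_i$. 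Thus $\Phi(C_i)$ and $\Phi(C)=\pi$ agree letter-for-letter outside the region coming from $I_{i+1}\cup I_i$, and they are equal if and only if the increasingly-sorted word on the signed elements of $I_{i+1}\cup I_i$ equals the concatenation $\omega'_{i+1}\omega'_i$.

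Next, since $\omega'_{i+1}$ and $\omega'_i$ are each already increasing and come from disjoint signed sets, this concatenation equals the sorted word precisely when it is itself increasing, i.e.\ when the last letter of $\omega'_{i+1}$ is smaller than the first letter of $\omega'_i$. By the definition of $\ell$, the block $\omega'_{i+1}$ ends at position $\ell(C,i)$ and $\omega'_i$ begins at position $\ell(C,i)+1$ of $\pi$; moreover for $i \in [k-1]$ we have $1 \le \ell(C,i) \le n-1$, since $I_{i+1}$ and $I_1$ are nonempty. Reading off $w(\pi)=(0,\pi(1),\dots,\pi(n))$, the inequality $\pi(\ell(C,i)) < \pi(\ell(C,i)+1)$ is exactly $\ell(C,i)\notin\Des(\pi)$, which settles the first claim.

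Finally, for $i=k$ two things change simultaneously: the top zero-block $I_{k+1}=\{i:\omega^{(k)}_i=0\}$ is enlarged to $I_{k+1}\cup I_k = \{i:\omega^{(k-1)}_i=0\}$, and the signing now uses $\BAR(\omega^{(k-1)})$ instead of $\BAR(\omega^{(k)})$. Since every index of $[n]$ contributes exactly one letter whose sign is dictated by $\BAR$, the two permutations can agree only if $\BAR(\omega^{(k)})=\BAR(\omega^{(k-1)})$, giving the necessity of the extra condition; granting it, the signing is identical and the previous paragraph applies verbatim to the merge of $\omega'_{k+1}$ and $\omega'_k$, producing the descent condition at position $\ell(C,k)$. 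The step I expect to require the most care is the boundary case $\ell(C,k)=0$, which occurs exactly when $\omega^{(k)}$ has full support; there "$\ell(C,k)\notin\Des(\pi)$" reads "$\pi(1)>0$". I would resolve it by noting that $\omega^{(k-1)}$ vanishes on $I_k$, so Lemma~\ref{lem:nonzero} gives $\BAR(\omega^{(k-1)})\cap I_k=\varnothing$; combined with $\BAR(\omega^{(k)})=\BAR(\omega^{(k-1)})$ this shows no element of $I_k$ is barred, forcing $\pi(1)>0$. Hence in this case the descent condition is automatically implied by the $\BAR$-condition, so the stated equivalence remains consistent, and the main difficulty is simply disentangling these two independent sources of change (re-signing versus block-merging) at the front of the window.
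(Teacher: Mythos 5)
Your proposal is correct and follows essentially the same route as the paper's proof: track how deleting $\omega^{(i)}$ merges the blocks $I_{i+1}$ and $I_i$ in Definition~\ref{def:Phi}, reduce equality of $\Phi(C_i)$ and $\Phi(C)$ to whether the concatenation $\omega'_{i+1}\omega'_i$ is already sorted (i.e.\ no descent at position $\ell(C,i)$), and observe that deleting the top element changes the signing via $\BAR$, forcing the extra condition $\BAR(\omega^{(k)})=\BAR(\omega^{(k-1)})$. Your treatment of the necessity of the $\BAR$-condition and of the boundary case $\ell(C,k)=0$ is somewhat more explicit than the paper's, but it is the same argument.
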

\begin{proof}
    Suppose first that $i < k$.
    Recall that $C_i$ is the subchain of $C$ with $\omega^{(i)}$ removed.
    Let $I_s$, $\bar{I_s}$ and $\omega'_s$ be the maps used in Definition~\ref{def:Phi} for $C$ and let $J_s$, $\bar{J_s}$ and ${v}'_{s}$ be the corresponding maps for $C_i$.
    By construction of $\Phi$, it is clear that $I_s = J_s$, $\bar{I_s} = \bar{J_s}$ and $\omega'_s = {v}'_{s}$ for all $s < i$ and $I_s = J_{s-1}$, $\bar{I_s} = \bar{{J}_{s-1}}$ for all $s > i+1$ and $\omega'_s = {v}'_{s-1}$ for $s > i+1$.
    It suffices to show that $\omega'_{i+1} \omega'_{i} = {v}'_{i}$ if and only if $\ell(C,i) \notin \Des(\pi)$.

    If $\omega'_{i+1}  \omega'_{i}  = {v}'_{i}$ then for every $j \in \bar{I_i}$ and $k \in \bar{I_{i+1}}$ we have $j > k$ and therefore there is no descent at $\ell(C,i)$.
    Similarly if there is no descent at $\ell(C,i)$, then we can add an arbitrary cut in ${v}_i$ and split it into $\omega'_{i+1}\omega'_i$, giving us the desired result.

    Suppose next that $i = k$.
    This is similar to the previous case with the exception that if $\BAR(\omega^{(k)}) \neq \BAR(\omega^{(k-1)})$, then we no longer have $\bar{I_s} = \bar{{J}_s}$ for $s < k$ hence the additional requirement in the only if.
\end{proof}

By repeated applications of the previous lemma we have the following.
\begin{proposition}
    Let $0 \leq m \leq n-1$ such that either $m$ is even or $m = n-1$.
    For $C \in \Delta_{n,m}$ a chain with $k$ elements and $I \subseteq [k]$, then $\Phi(C_I) = \Phi(C)$ if and only if $I \cap \Des(\Phi(C)) = \emptyset$ and $\BAR(\omega_I) = \BAR(\omega)$ where $\omega$ and $\omega_I$ are the top sign vectors in $C_I$ and $C$ respectively.
    \label{prop:PhiEqual}
\end{proposition}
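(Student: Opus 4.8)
The plan is to prove both directions by induction on $\abs{I}$, using Lemma~\ref{lem:descent_or_remove} as the single-step engine; throughout I read the condition $I \cap \Des(\Phi(C)) = \emptyset$ through the correspondence $i \mapsto \ell(C,i)$ of Definition~\ref{def:Phi}, that is, as $\ell(C,i) \notin \Des(\Phi(C))$ for every $i \in I$. The base case $I = \emptyset$ is immediate since $C_I = C$. For the inductive step I would peel off one index of $I$ at a time and verify that the two stated conditions are exactly what accumulates.

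The descent bookkeeping should be the routine part. Whenever a removal preserves $\Phi$, the permutation $\pi = \Phi(C)$ is literally unchanged, so $\Des(\pi)$ is fixed and the block boundaries of the smaller chain sit at the same positions of $\pi$ as before (deleting a non-descent cut leaves every other cut where it was). Hence for a surviving index $i'$ the relevant boundary is still $\ell(C,i')$, the requirements $\ell(C,i) \notin \Des(\pi)$ for distinct $i \in I$ do not interfere, and they may be imposed in any order. Removing an interior index ($i<k$) never touches the top sign vector, so $\BAR$ of the top, and thus the $\BAR$ clause, is untouched; this case is handled purely by the $i<k$ half of Lemma~\ref{lem:descent_or_remove}.

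I expect the real obstacle to be the top, i.e.\ when $\max I = k$ and a whole suffix $\omega^{(j+1)}, \dots, \omega^{(k)}$ is removed, leaving new top $\omega_I = \omega^{(j)}$. Iterating the $i=k$ half of the lemma would ask for the \emph{stepwise} equalities $\BAR(\omega^{(k)}) = \BAR(\omega^{(k-1)}) = \cdots = \BAR(\omega^{(j)})$, whereas the proposition records only the endpoint equality $\BAR(\omega_I) = \BAR(\omega)$; and $\BAR$ is not monotone as a set along a chain (for instance $\BAR((0,+,-)) = \{2,3\}$ while $\BAR((+,+,-)) = \{1,3\}$), so the passage from endpoints to steps is precisely where care is needed. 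The clean way to dispatch this is to return to Definition~\ref{def:Phi}: every block of $\Phi(C)$ is signed according to $\BAR(\omega^{(k)})$ and every block of $\Phi(C_I)$ according to $\BAR(\omega_I)$, so a value $v \in [n]$ is negated in $\Phi(C)$ iff $v \in \BAR(\omega^{(k)})$ and in $\Phi(C_I)$ iff $v \in \BAR(\omega_I)$. Equality of the two signed permutations therefore forces their sets of negated values to agree, yielding $\BAR(\omega_I) = \BAR(\omega)$ directly, with no stepwise hypothesis. Conversely, once this single equality holds the signs match, Lemma~\ref{lem:nonzero} guarantees that the positions zeroed in passing from $\omega$ to $\omega_I$ are unbarred in both words, and the suffix boundaries $\ell(C,k), \dots, \ell(C,j+1)$ being non-descents makes the merged top block sort to the same increasing word. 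Combining this with the interior case closes the induction and gives the stated equivalence.
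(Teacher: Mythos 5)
Your proposal is correct and takes essentially the same approach as the paper, whose entire proof is ``by repeated applications'' of Lemma~\ref{lem:descent_or_remove} --- exactly the single-step engine driving your induction. The subtlety you isolate at the top of the chain is handled soundly by your observation that the negated letters of $\Phi$ are precisely $\BAR$ of the top sign vector (it can also be sidestepped by deleting a removed suffix while keeping the original top element until last, so that only the endpoint $\BAR$ equality is ever invoked).
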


For any $\pi \in S_D^n$ we let $C^{\pi}$ denote the saturated chain which is in bijection with $\pi$ from Lemma~\ref{lem:bijection}.
Let $C_{\pi}$ denote the rank selected subchain of $C^{\pi}$ restricted to the ranks $\{n - i : i \in \Des(\pi)\}$.

\begin{example}
Considering $\pi = \bar{2}315\bar{4}$ we have
\[C^{\pi} : (0,0,0,+,0) < (0,0,0,+,+) < (+,0,0,+,+) < (+,0,-,+,+) < (+,-,-,+,+)\]
and
\[C_{\pi} :  (0,0,0,+,0) < (+,0,0,+,+) < (+,-,-,+,+)\]
since $\Des(\pi) = \{0,2,4\}$.
\label{ex:bijection2}
\end{example}

\begin{lemma}
If $\pi \in \Phi(\Delta_{n,m})$ for $0 \leq m \leq n-1$ such that $m$ is even or $m = n-1$, then $\Phi^{-1}(\pi) = [C_{\pi}, C^{\pi}]$ is a Boolean interval in $\mathcal{F}(\Delta_{n,m})$
\label{lem:BooleanInt}
\end{lemma}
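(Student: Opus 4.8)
The plan is to prove the two inclusions $\Phi^{-1}(\pi) \subseteq [C_\pi, C^\pi]$ and $[C_\pi, C^\pi] \subseteq \Phi^{-1}(\pi)$, after first disposing of the word ``Boolean''. In the face poset $\mathcal{F}(\Delta_{n,m})$ of \emph{any} simplicial complex, every interval $[G,F]$ between two faces is isomorphic to the Boolean poset on the vertex set $F \setminus G$, since each set $S$ with $G \subseteq S \subseteq F$ is again a face. As $C^\pi$ is a facet by Lemma~\ref{lem:bijection} and $C_\pi$ is by construction a subchain of $C^\pi$, the interval $[C_\pi, C^\pi]$ is automatically Boolean, and all that remains is to identify it with the fiber $\Phi^{-1}(\pi)$.

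For the inclusion $[C_\pi, C^\pi] \subseteq \Phi^{-1}(\pi)$ I would work entirely inside the facet $C^\pi$ and invoke Proposition~\ref{prop:PhiEqual}. Writing a subchain $C \in [C_\pi, C^\pi]$ as $C = (C^\pi)_I$, the hypothesis $C \supseteq C_\pi$ says exactly that the removed positions $I$ avoid the positions singled out in $C_\pi$; since $C^\pi$ is saturated we have $\ell(C^\pi,i) = n-i$, so these are precisely the positions $i$ with $n-i \in \Des(\pi)$. Thus $I$ meets no descent, and Proposition~\ref{prop:PhiEqual} gives $\Phi(C) = \pi$ provided the accompanying condition $\BAR(\omega_I) = \BAR(\omega^{(n)})$ on top sign vectors also holds. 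The key point making this automatic is that whenever a top chunk of $C^\pi$ is removable under the descent condition, i.e. $0,1,\dots,t-1 \notin \Des(\pi)$, the corresponding final letters satisfy $0 < \pi(1) < \pi(2) < \cdots < \pi(t)$, so the coordinates being zeroed are not cyclic sign flips of $\omega^{(n)}$; zeroing a coordinate that is not a sign flip leaves every other coordinate's flip-status unchanged, and hence $\BAR$ is preserved. This identifies the subchains of $C^\pi$ mapping to $\pi$ as exactly $[C_\pi, C^\pi]$.

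The reverse inclusion $\Phi^{-1}(\pi) \subseteq [C_\pi, C^\pi]$ is the part I expect to be the main obstacle, since a priori a chain $C$ with $\Phi(C)=\pi$ need not be contained in $C^\pi$. Here I would prove a rigidity statement: each sign vector of $C$ must coincide with the element of $C^\pi$ of the same weight. First, reading the word $\pi = \omega'_{r+1}\cdots\omega'_1$ from the right recovers the order in which coordinates become nonzero, so the successive supports of the elements of $C$ are forced to be the same initial segments of the underlying permutation as those occurring in $C^\pi$. Second, I would establish the small sublemma that a vector in $\PV_n$ is determined up to global sign by its support together with its set of cyclic sign flips (reading the support cyclically and flipping sign exactly at the elements of $\BAR$ reconstructs it). Since $\Phi$ encodes the negative entries of $\pi$ as $\BAR$ of the top of $C$, and the complementary coordinates lie in the all-positive block $\omega'_{r+1}$ and are therefore not sign flips by Lemma~\ref{lem:nonzero} together with the same ``removing a non-flip preserves $\BAR$'' principle, the top of $C$ carries the same support-and-$\BAR$ data as the matching restriction of $\omega^{(n)}$; hence the two agree, and then so do all lower elements, which are restrictions of the top. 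This yields $C \subseteq C^\pi$, and combined with the second paragraph gives $C \in [C_\pi, C^\pi]$, completing the identification $\Phi^{-1}(\pi) = [C_\pi, C^\pi]$.

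The two delicate points to get right are the bookkeeping that translates chain positions into descent values through $\ell(C,\cdot)$, so that ``$C \supseteq C_\pi$'' matches ``$I$ avoids $\Des(\pi)$'', and the two uses of the principle that zeroing a coordinate which is not a cyclic sign flip does not disturb $\BAR$. Because this principle is the technical heart of both inclusions, I would factor it out as an explicit sublemma on $\BAR$ and restriction before carrying out the argument above.
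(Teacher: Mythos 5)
Your proposal is correct, and its engine is the same as the paper's: the identification of which subchains of $C^{\pi}$ still map to $\pi$ is exactly the paper's argument, carried out there by decomposing $\pi$ into its maximal ascending blocks and applying Lemma~\ref{lem:descent_or_remove} inductively (your use of the iterated form, Proposition~\ref{prop:PhiEqual}, together with the bookkeeping $\ell(C^{\pi},i)=n-i$, is the same computation). Where you genuinely go beyond the paper is in the other containment: the paper's proof works entirely inside the facet $C^{\pi}$ and treats the inclusion $\Phi^{-1}(\pi)\subseteq[C_{\pi},C^{\pi}]$ as implicit in the definition of $\Phi$, whereas you prove it by a reconstruction argument --- the supports of the elements of any chain in the fiber are forced to be suffix sets of $\pi$, and an element of $\PV_n$ is determined up to global sign by its support together with its set of cyclic sign flips, so every element of the chain must coincide with the element of $C^{\pi}$ of the same weight. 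You also make explicit a point the paper only gestures at in a parenthetical: when top elements of $C^{\pi}$ are removed under the descent condition, the zeroed coordinates correspond to a positive increasing prefix of $\pi$, and zeroing a coordinate that is not a cyclic sign flip leaves $\BAR$ unchanged (using Lemma~\ref{lem:nonzero}), so the extra $\BAR$ hypothesis in Lemma~\ref{lem:descent_or_remove} is automatically satisfied and the bottom of the interval really is $C_{\pi}$. The paper's route is shorter; yours is more complete, and isolating the ``zeroing a non-flip preserves $\BAR$'' sublemma is a sensible way to organize both halves.
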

\begin{proof}
    Suppose $\pi$ is an element in $S_n^D$ and let $\Des(\pi) = \left\{ d_1, d_2, \ldots d_k \right\}$ be the set of descents (in numerical order).
    We write $\pi$ in the following manner:
    \[
        \pi = \pi_{1,1}\pi_{1,2} \ldots \pi_{1,d_1} \pi_{2, 1} \pi_{2, 2} \ldots \pi_{2,d_2-d_1} \ldots \pi_{k,1} \pi_{k,2} \ldots \pi_{k,d_k-d_{k-1}}
    \]

    Let $C^\pi$ be the saturated chain which is in bijection with $\pi$ from Lemma~\ref{lem:bijection} and notice that we can label each element in the saturated chain $C$ by one of the $\pi_{i,j}$ as follows:
    \begin{align*}
        C^\pi :~&\omega^{(k,d_k - d_{k-1})} < \cdots <\omega^{(k,2)} <\omega^{(k,1)}\\
        &< \cdots \\
        &< \omega^{(2,d_2-d_1)} < \cdots < \omega^{(2,2)} < \omega^{(2,1)} \\
        &< \omega^{(1,d_1)} < \cdots <\omega^{(1,2)} < \omega^{(1,1)}
    \end{align*}

    By Lemma~\ref{lem:descent_or_remove}, we can inductively remove any $\omega^{(i,j)}$ as long as $j$ is not maximal in $\pi_i$ (or if $i = 1$, then we also require that the sign vector directly before has the same set of cyclic sign flips).
    Since the order of the removals doesn't alter our permutation, the ordering of these subchains is isomorphic to the subsets of a set ordered by inclusion, in other words the Boolean interval, with the bottom element being $C_\pi$, the rank selected subchain of $C^\pi$ restricted to the ranks $\{n - i : i \in \Des(\pi)\}$.
\end{proof}

\begin{lemma}
If $0 \leq m \leq n-1$ such that either $m$ is even or $m = n-1$, then $\Phi^{-1}(\pi_1) \cap \Phi^{-1}(\pi_2) = \emptyset$ for any $\pi_1, \pi_2 \in \Phi(\Delta_{n,m})$ with $\pi_1 \neq \pi_2$.
\label{lem:disjoint}
\end{lemma}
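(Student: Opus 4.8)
The plan is to observe that this statement is essentially the assertion that $\Phi$ is a well-defined function, so that its fibers are automatically pairwise disjoint. First I would verify from Definition~\ref{def:Phi} that $\Phi(C)$ is uniquely determined by the chain $C$: given $C$, the elements $\omega^{(1)} < \cdots < \omega^{(r)}$ are listed in the unique order induced by the poset $P_{n,m}$, and then each $I_s$, each $\bar{I_s}$, and each word $\omega'_s$ is produced by a deterministic rule, with step (3) specifying that the entries of $\bar{I_s}$ are written in increasing order so that no ambiguity remains. The final concatenation $\Phi(C) = \omega'_{r+1} \cdots \omega'_1$ is therefore a single, well-defined word.

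Next I would confirm that this word genuinely lies in $S^D_n$, so that speaking of $\Phi^{-1}(\pi)$ for $\pi \in \Phi(\Delta_{n,m})$ makes sense. The index sets $I_1, \ldots, I_{r+1}$ partition $[n]$, since each index becomes nonzero at exactly one step $s$ and is then placed in $I_s$, while any index that remains zero throughout is collected in $I_{r+1}$. Hence each of $1, \ldots, n$ appears exactly once, with a sign, in $\Phi(C)$, so $\Phi(C)$ is a signed permutation. The negative entries of $\Phi(C)$ are precisely the indices lying in $\BAR(\omega^{(r)})$, and by Lemma~\ref{lem:even} this set has even size; therefore $\Phi(C)$ has an even number of negatives and lies in $S^D_n$.

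Given that $\Phi$ is a well-defined function $\Delta_{n,m} \to S^D_n$, the conclusion follows at once: if some chain $C$ belonged to both $\Phi^{-1}(\pi_1)$ and $\Phi^{-1}(\pi_2)$, then $\pi_1 = \Phi(C) = \pi_2$, contradicting $\pi_1 \neq \pi_2$, so the fibers are pairwise disjoint. I do not expect a genuine obstacle here; the only point requiring care is that a face of $\Delta_{n,m}$ is an unordered chain whose listing $\omega^{(1)} < \cdots < \omega^{(r)}$ is canonical, so that no choices enter the construction of $\Phi(C)$. Combined with Lemma~\ref{lem:BooleanInt}, which identifies each nonempty fiber $\Phi^{-1}(\pi)$ with the Boolean interval $[C_\pi, C^\pi]$, this disjointness is exactly what is needed to exhibit $\mathcal{F}(\Delta_{n,m})$ as a disjoint union of Boolean intervals indexed by the facets, completing the partitionability argument.
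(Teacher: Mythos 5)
Your proof is correct, and it takes a genuinely more elementary route than the paper. The paper disposes of this lemma in one line by citing Lemma~\ref{lem:BooleanInt} (which identifies each fiber $\Phi^{-1}(\pi)$ with the Boolean interval $[C_\pi,C^\pi]$) together with the removal characterization of Lemma~\ref{lem:descent_or_remove}; in effect it reads the disjointness off from the explicit description of the fibers as intervals. You instead observe that, since $\Phi$ is a single-valued function on chains, distinct elements of its image automatically have disjoint preimages, and you supply the only checks this requires: the listing of a chain is canonical, each step of Definition~\ref{def:Phi} is deterministic, the sets $I_1,\dots,I_{r+1}$ partition $[n]$ (supports are nested along a chain, so each index enters at exactly one step or never), and the negative letters are exactly $\BAR(\omega^{(r)})$, which is of even size by Lemma~\ref{lem:even} (and disjoint from $I_{r+1}$ by Lemma~\ref{lem:nonzero}), so $\Phi(C)\in S^D_n$. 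This makes transparent that the lemma, as literally stated about preimages, carries no combinatorial content beyond well-definedness; what each approach buys is different emphasis: the paper's phrasing keeps the focus on the intervals $[C_\pi,C^\pi]$ whose disjointness is what Theorem~\ref{thm:partitionable} actually uses, while your version isolates the triviality and correctly locates the substantive work in Lemma~\ref{lem:BooleanInt} (fibers equal Boolean intervals) and in the covering of $\mathcal{F}(\Delta_{n,m})$ by the fibers, which you note in your closing sentence. Either argument is acceptable; yours is cleaner as a proof of the statement as written.
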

\begin{proof}
    This comes directly from Lemma~\ref{lem:BooleanInt} together with the characterization of removing sign vectors from chains in Lemma~\ref{lem:descent_or_remove}.
\end{proof}

\begin{figure}[h!]
    \begin{center}
    \begin{tikzpicture}
                    \node[color=red] (min) at (0,-1.5) {$\varnothing$};

            \node[color=red] (p0) at (-3,0) {$(0,+)$};
            \node[color=blue] (0p) at (3,0) {$(+,0)$};
            \node[color=red] (pp) at (-1,0) {$(+,+)$};
            \node[color=darkgray] (pn) at (1,0) {$(+,-)$};
            
            \node[color=red] (p0-pp) at (-4,2) {$\overset{12}{(+,0)<(+,+)}$};
            \node[color=darkgray] (p0-pn) at (-1,2) {$\overset{\bar{2}\bar{1}}{(+,0)<(+,-)}$};
            \node[color=blue] (0p-pp) at (2,2) {$\overset{21}{(+,0)<(+,+)}$};
            \node[color=purple] (0p-pn) at (5,2) {$\overset{\bar{1}\bar{2}}{(0,+)<(+,-)}$};

	\draw[color=red, ultra thick] (min.north) -- (p0.south);
		\draw[color=red, ultra thick] (min.north) -- (pp.south);
		\draw[dashed] (min.north) -- (0p.south);
		\draw[dashed] (min.north) -- (pn.south);
            \draw[color=red, thick] (p0.north) -- (p0-pp.south);
            \draw[dashed] (p0.north) -- (p0-pn.south);
            \draw[color=blue, ultra thick] (0p.north) -- (0p-pp.south);
            \draw[color=red, ultra thick] (pp.north) -- (p0-pp.south);
            \draw[dashed] (pp.north) -- (0p-pp.south);
            \draw[dashed] (pn.north) -- (0p-pn.south);
            \draw[dashed] (0p.north) -- (0p-pn.south);

            \draw[color=darkgray, ultra thick] (pn.north) -- (p0-pn.south);
    \end{tikzpicture}

        \caption{An example of the partitioning on $\mathcal{F}(\Delta_{2,1})$.}
        \label{fig:partition}
    \end{center}
\end{figure}
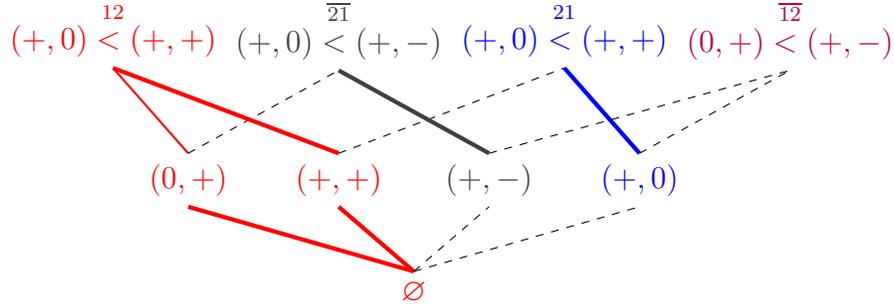

\begin{theorem}
If $0 \leq m \leq n-1$ such that either $m$ is even or $m = n-1$, then $\Delta_{n,m}$ is partitionable with
\[\mathcal{F}(\Delta_{n,m}) = \bigsqcup_{\pi \in \Phi(\Delta_{n,m})} [C_{\pi}, C^{\pi}]\]
and thus
\[h_j(\Delta_{n,m}) = | \{\pi \in \Phi(\Delta_{n,m}) : \des(\pi) = j \} |\]
for each $0 \leq j \leq n$.
\label{thm:partitionable}
\end{theorem}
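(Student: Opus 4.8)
The plan is to assemble the theorem directly from the structural lemmas already established, viewing $\Phi$ as a surjection from the face poset $\mathcal{F}(\Delta_{n,m})$ onto its image $\Phi(\Delta_{n,m})$ and analyzing its fibers. First I would observe that $\Phi$ is defined on \emph{every} face of $\Delta_{n,m}$, that is, on every chain $C$ including the empty chain, so the fibers $\Phi^{-1}(\pi)$ for $\pi \in \Phi(\Delta_{n,m})$ automatically cover all of $\mathcal{F}(\Delta_{n,m})$: any face $C$ lies in $\Phi^{-1}(\Phi(C))$. Lemma~\ref{lem:disjoint} guarantees that these fibers are pairwise disjoint, and Lemma~\ref{lem:BooleanInt} identifies each fiber as the Boolean interval $[C_\pi, C^\pi]$. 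Combining these three facts yields the disjoint-union decomposition
\[
\mathcal{F}(\Delta_{n,m}) = \bigsqcup_{\pi \in \Phi(\Delta_{n,m})} \Phi^{-1}(\pi) = \bigsqcup_{\pi \in \Phi(\Delta_{n,m})} [C_\pi, C^\pi].
\]

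Next I would verify that this decomposition is a partitioning in the precise sense of the definition, namely that it is indexed by the facets of $\Delta_{n,m}$ with each interval Boolean and topped by a facet. The top element of $[C_\pi, C^\pi]$ is the saturated chain $C^\pi$, which is a facet, and by Lemma~\ref{lem:bijection} the assignment $\pi \mapsto C^\pi$ is a bijection between $\Phi(\Delta_{n,m})$ and the facets of $\Delta_{n,m}$. Hence reindexing the disjoint union by facets recovers the required form $\bigsqcup_{F} [G_F, F]$ with $G_{C^\pi} = C_\pi$, establishing partitionability.

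Finally, for the $h$-vector I would invoke Stanley's formula for partitionable complexes quoted above, $h_j = |\{F : |G_F| = j\}|$. The only remaining computation is to identify $|G_F| = |C_\pi|$ with a descent statistic: since $C_\pi$ is the rank-selected subchain of $C^\pi$ on the ranks $\{n - i : i \in \Des(\pi)\}$, its cardinality as a face is exactly $|\Des(\pi)| = \des(\pi)$. Substituting gives $h_j(\Delta_{n,m}) = |\{\pi \in \Phi(\Delta_{n,m}) : \des(\pi) = j\}|$ for each $j$.

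Because the hard analytic work lives in Lemmas~\ref{lem:bijection}, \ref{lem:BooleanInt}, and~\ref{lem:disjoint}, the theorem itself is largely bookkeeping, and I do not expect a serious obstacle. The points that still require care are the bridge between the two indexings, namely confirming that indexing by $\pi$ and indexing by facets coincide through Lemma~\ref{lem:bijection}, together with the edge case of the empty face: it must sit at the bottom of a single interval. I would check that $C_\pi = \varnothing$ exactly when $\des(\pi) = 0$, so that the empty face is covered precisely once, by the interval of the identity permutation, which is the unique element of $\Phi(\Delta_{n,m})$ with no descents and accounts for $h_0 = 1$.
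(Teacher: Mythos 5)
Your proposal is correct and follows essentially the same route as the paper: the paper's proof simply cites Lemma~\ref{lem:BooleanInt} and Lemma~\ref{lem:disjoint} for the partition and deduces the $h$-vector formula from partitionability, exactly as you do, with your additional bookkeeping (coverage by fibers, reindexing via Lemma~\ref{lem:bijection}, $\abs{C_\pi}=\des(\pi)$, and the empty-face check) being details the paper leaves implicit.
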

\begin{proof}
By Lemma~\ref{lem:BooleanInt} and Lemma~\ref{lem:disjoint} it follows that $\Delta_{n,m}$ is partitionable whenever $m$ is even or $m = n-1$.
The $h$-vector equality follows from the partitionability.
\end{proof}

\begin{remark}
One interesting question that we leave open is the study of the {\sl flag}-$h$ vector of $\Delta_{m,n}$. 
Since $P_{n,m}$ is a graded poset we have a flag-$h$ vector and the content of Theorem~\ref{thm:partitionable} can be modified to describe the flag-$h$ vector.
Given a flag-$h$ vector, there is a natural quasisymmetric function assigned to it (see~\cite{ABS,Ehr96}).
It would be interesting to study this function, but our initial computation shows that it is not symmetric.
One may need to use different notions of quasisymmetric as in~\cite{QsymB}, but we leave this question to the interested reader.
\end{remark}

In Figure~\ref{fig:partition} we give an example of the partitioning given in Theorem~\ref{thm:partitionable}.
Above each facet we write the signed permutation given in the bijection from Lemma~\ref{lem:bijection}.

Next we give an application of Theorem~\ref{thm:partitionable} that uses Klee's generalization of the Dehn-Sommerville relations~\cite{Klee} which states that
\begin{equation}
h_{d-j} - h_j = (-1)^j \binom{d}{j} ((-1)^{d-1} \tilde{\chi}(\Delta) - 1)
\label{eq:DS}
\end{equation}
where $h = (h_0, h_1 ,\dots, h_d)$ is the $h$-vector of a $(d-1)$-dimensional simplicial complex $\Delta$ which is a (homology) manifold.

\begin{corollary}
If $n$ is even, then $D(n,j) = D(n,n-j)$ for all $0 \leq j \leq n$. If $n$ is odd, then $D(n,j) = D(n,n-j) + (-1)^j \binom{n}{j}$ for all $0 \leq j \leq n$.
\label{cor:DS}
\end{corollary}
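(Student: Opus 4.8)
The plan is to realize the type-$D$ Eulerian numbers as the $h$-vector of the single complex $\Delta_{n,n-1}$ and then let Klee's relation~\eqref{eq:DS} do the work. First I would specialize Theorem~\ref{thm:partitionable} to $m=n-1$ and combine it with the identity $\Phi(\Delta_{n,n-1})=S^D_n$ from Lemma~\ref{lem:bijection}. This yields, for every $0\le j\le n$,
\[
    h_j(\Delta_{n,n-1}) = |\{\pi\in S^D_n : \des(\pi)=j\}| = D(n,j),
\]
so the corollary is precisely a (near-)symmetry statement about the $h$-vector of $\Delta_{n,n-1}$, which is exactly the kind of thing Dehn--Sommerville relations govern.

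Next I would fix the parameters needed to invoke~\eqref{eq:DS}. Since the rank of $\omega\in P_{n,n-1}$ is $\wt(\omega)-1$ and weights run from $1$ to $n$, every maximal chain has $n$ elements, so $\Delta_{n,n-1}$ has dimension $n-1$; in the indexing of~\eqref{eq:DS} this forces $d=n$ and an $h$-vector $(h_0,\dots,h_n)$, matching the range $0\le j\le n$. The topological input is that the geometric realization of $\Delta_{n,n-1}$ is the \emph{closed} manifold $\mathbb{RP}^{n-1}$, so~\eqref{eq:DS} applies and the only remaining quantity is the reduced Euler characteristic. From $\chi(\mathbb{RP}^{k})=1$ for $k$ even and $\chi(\mathbb{RP}^{k})=0$ for $k$ odd, together with $\tilde{\chi}=\chi-1$ and $k=n-1$, I obtain $\tilde{\chi}(\Delta_{n,n-1})=0$ when $n$ is odd and $\tilde{\chi}(\Delta_{n,n-1})=-1$ when $n$ is even.

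Finally I would substitute $d=n$ and $h_j=D(n,j)$ into~\eqref{eq:DS}, giving
\[
    D(n,n-j)-D(n,j) = (-1)^j\binom{n}{j}\bigl((-1)^{n-1}\tilde{\chi}(\Delta_{n,n-1})-1\bigr),
\]
and then evaluate the bracketed factor in each parity. For $n$ even one has $(-1)^{n-1}=-1$ and $\tilde{\chi}=-1$, so the bracket is $(-1)(-1)-1=0$ and $D(n,j)=D(n,n-j)$. For $n$ odd one has $(-1)^{n-1}=1$ and $\tilde{\chi}=0$, so the bracket is $-1$ and $D(n,n-j)-D(n,j)=-(-1)^j\binom{n}{j}$, which rearranges to $D(n,j)=D(n,n-j)+(-1)^j\binom{n}{j}$. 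I expect the only genuine obstacle to be sign and convention bookkeeping: confirming that the dimension and $h$-vector indexing of~\eqref{eq:DS} align with $0\le j\le n$, that $\Delta_{n,n-1}$ satisfies Klee's manifold hypothesis as a boundaryless manifold rather than a manifold with boundary, and that the factors $(-1)^{n-1}$ and the reduced (rather than unreduced) $\tilde{\chi}$ are tracked consistently through both parities.
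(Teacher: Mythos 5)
Your proposal is correct and follows essentially the same route as the paper: identify $h_j(\Delta_{n,n-1})=D(n,j)$ via Theorem~\ref{thm:partitionable}, use that the geometric realization is the closed manifold $\mathbb{RP}^{n-1}$ with $\tilde{\chi}=-1$ for $n$ even and $0$ for $n$ odd, and substitute into Klee's relation~\eqref{eq:DS} with $d=n$. Your parity-by-parity evaluation of the bracketed factor is exactly the computation the paper performs (and your sign bookkeeping is consistent with it), so there is nothing to add.
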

\begin{proof}
By Theorem~\ref{thm:partitionable} we see that $h_j(\Delta_{n,n-1}) = D(n,j)$.
The geometric realization of $\Delta_{n,n-1}$ is the manifold $\mathbb{RP}^{n-1}$. 
It is known that
\[\tilde{\chi}(\Delta_{n,n-1}) = \begin{cases} -1 & n \equiv 0 \bmod 2; \\ 0 & n \equiv 1 \bmod 2. \end{cases} \]
Thus by applying~(\ref{eq:DS}) we find that for $n$ even
\[D(n,j) = h_j(\Delta_{n,n-1}) = h_{n-j}(\Delta_{n,n-1}) = D(n,n-j)\]
and for $n$ odd
\[D(n,j) = h_j(\Delta_{n,n-1})  = h_{n-j}(\Delta_{n,n-1}) + (-1)^j \binom{d}{j} = D(n,n-j) +  (-1)^j \binom{d}{j}.\]
for each $0 \leq j \leq n$.
\end{proof}

\begin{remark}
The content of Corollary~\ref{cor:DS} was previously known as it follows from~\cite[Proposition 4.1]{typeD} and~\cite[Proposition 4.3]{typeD}.
\end{remark}


\section{Acknowledgements}
The authors would like to thank Robin Sulzgruber for many valuable conversations.
The authors benefited from the working environment of the Algebraic Combinatorics Seminar at the Fields Institute.


\end{document}